\documentclass{amsart}

\usepackage[latin1]{inputenc}
\usepackage{enumerate}
\usepackage{a4wide}
\usepackage{mathrsfs}
\usepackage{amsmath}
\usepackage{amssymb}
\usepackage{amsthm}
\usepackage{stmaryrd}
\usepackage{mathtools}

\setcounter{MaxMatrixCols}{16}

\usepackage{tikz}
\usetikzlibrary{matrix}
\usetikzlibrary{arrows}
\usepackage{pdfsync}

\newtheorem{prop}{Proposition}

\newtheorem{thm}{Theorem}
\newtheorem{cor}{Corollary}

\theoremstyle{definition}

\newtheorem{rem}{Remark}

\usepackage{xspace}
\newcommand{\Q}{\ensuremath{\mathbb{Q}}\xspace}
\newcommand{\X}{\ensuremath{\mathbb{X}}\xspace}

\newcommand{\T}{\ensuremath{\mathbb{T}}\xspace}
\newcommand{\Z}{\ensuremath{\mathbb{Z}}\xspace}
\newcommand{\C}{\ensuremath{\mathbb{C}}\xspace}
\newcommand{\PP}{\ensuremath{\mathbb{P}}\xspace}

\newcommand{\OO}{\ensuremath{\mathcal{O}}\xspace}

\newcommand{\Tn}{\ensuremath{T_\infty}\xspace}

\DeclareMathOperator{\Hom}{Hom}
\DeclareMathOperator{\orb}{orb}
\DeclareMathOperator{\cone}{cone}

\DeclareMathOperator{\Cl}{Cl}

\DeclareMathOperator{\Eff}{Eff}
\DeclareMathOperator{\Mov}{Mov}

\DeclareMathOperator{\Sample}{SAmple}

\begin{document}

\title[The Cox ring of the space of complete rank two collineations]%
{The Cox ring of the space of \\ complete rank two collineations}

\author[J.~Hausen]{J\"urgen Hausen}
\address{Universit\"at T\"ubingen,
Fachbereich Mathematik,
Auf der Morgenstelle 10, 
72076 T\"ubingen, 
Germany}
\email{juergen.hausen@uni-tuebingen.de}

\author[M.~Liebend\"orfer]{Michael Liebend\"orfer}
\address{Universit\"at Kassel,
Fachbereich 10 Mathematik und Naturwissenschaften,
Institut f\"ur Mathematik,
34109 Kassel, 
Germany}
\email{liebend@mathematik.uni-kassel.de}

\subjclass{14C20}

\maketitle

\section{Introduction}

The space $X(u,c,d)$ of complete collineations 
compactifies the space of isomorphisms between
$u$-dimensional vector subspaces of two given
complex vector spaces $V$ and $W$ of
finite dimension $c$ and $d$ respectively;
it showed up in classical algebraic geometry, 
see~\cite{La1,La2} for some background.
In~\cite{Thaddeus}, one finds the following
precise definition: 
let $S_V$ denote the tautological bundle over 
the Grassmannian $G(u,V)$, consider the rational 
map
$$
\PP\Hom(S_V,W) 
\ \dashrightarrow \ 
\prod_{i=0}^u \PP \Hom(\bigwedge^iS_V,\bigwedge^iW),
\qquad 
f \ \mapsto \ \bigwedge^i f
$$
and define $X(u,c,d)$ to be the closure of the image.
Various descriptions of this space have been given
by Vainsencher~\cite{Vain} and Thaddeus~\cite{Thaddeus}.
Among other things it was shown in~\cite{Thaddeus} 
that $X(u,c,d)$ can be realized as the Chow/Hilbert 
quotient and also as the GIT-limit of a torus action 
on a Grassmannian; this establishes a nice 
analogy to results of Kapranov~\cite{Kap} on the 
space $\overline{M}_{0,n}$ of point configurations 
on the line.

In this note, we determine the Cox ring of the 
space of complete rank 2 collineations.
The Cox ring $\mathcal{R}(X)$ of a complete 
normal variety $X$ with free finitely 
generated divisor class group $\Cl(X)$ is
defined as follows:
take any subgroup $K \subseteq {\rm WDiv}(X)$ 
of the group of Weil divisors projecting 
isomorphically onto $\Cl(X)$ and set
\begin{eqnarray*}
\mathcal{R}(X)
& := & 
\bigoplus_{D \in K} \Gamma(X,\mathcal{O}_X(D)).
\end{eqnarray*}
The interesting cases for the space $X(2,c,d)$ of complete rank 2 
collineations are $c,d>2$; the remaining ones are simple, 
see~Remark~\ref{rem:c2ord2}.
Here comes our result.

\begin{thm}
\label{thmone}
Let $c,d > 2$.
The Cox ring $\mathcal{R}(X(2,c,d))$ of the space  
of complete rank 2 collineations is the factor algebra
$\C[T_{ij},T_\infty; \; 1 \le i < j \le c+d]/I$
with the ideal $I$ generated by
\begin{align*}
\Tn T_{ij}T_{kl}-T_{ik}T_{jl}+T_{il}T_{jk} 
&
\quad 
\text{for } 1 \le i < j < k < l \le c+d 
\text{ with } i,j \le c \text{ and } \ c+1 \le k,l,
\\
\phantom{\Tn}T_{ij}T_{kl}-T_{ik}T_{jl}+T_{il}T_{jk} 
&
\quad 
\text{for } 1 \le i < j < k < l \le c+d 
\text{ different from above}. 
\end{align*}
Moreover, the grading of the Cox ring 
$\mathcal{R}(X(2,c,d))$ by the 
divisor class group $\Cl(X(2,c,d)) = \Z^3$
is given by
$$
\deg(T_\infty) 
\ = \
(0,0,1),
\qquad\qquad
\deg(T_{ij})
\ = \
\begin{cases}
(1,1,-1)
& 
\text{if } i,j \le c,
\\
(1,0,0)
& 
\text{if } i \le c, \, c+1 \le j,
\\
(1,-1,0)
& 
\text{if } c+1 \le i,j.
\end{cases}
$$
\end{thm}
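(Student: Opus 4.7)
The plan is to identify $X := X(2,c,d)$ as a Mori dream space arising from a concrete birational construction, pin down the three relevant divisor classes together with natural sections, and then show that the stated Plücker-like quadratic relations cut out exactly the defining ideal of the Cox ring. Following Thaddeus's realization of $X(2,c,d)$ as the Chow/Hilbert quotient (equivalently, a GIT limit) of a $\mathbb{G}_m$-action on the Grassmannian $G(2, V \oplus W) = G(2, c+d)$ which distinguishes the subspaces $V$ and $W$, one obtains a birational morphism $\pi\colon X \to G(2, c+d)$ with a prime exceptional divisor; this divisor carries the section $\Tn$. Together with the pulled-back Plücker class and a third class reflecting the $\mathbb{G}_m$-weight (equivalently, the two fixed Schubert sub-Grassmannians $G(2,V)$ and $G(2,W)$), this should give $\Cl(X) = \Z^3$ with a basis making the stated degrees explicit.

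I would then identify each Plücker coordinate on $G(2, c+d)$ with its proper transform on $X$ and read off its degree from its vanishing on the $\mathbb{G}_m$-fixed strata. A coordinate $T_{ij}$ with $i, j \le c$ vanishes on $G(2, W)$, and after correction by the exceptional divisor acquires degree $(1,1,-1)$; symmetrically, $T_{ij}$ with $c+1 \le i, j$ vanishes on $G(2, V)$, giving $(1,-1,0)$; and a mixed coordinate $T_{ij}$ with $i \le c < j$ vanishes on neither stratum and keeps degree $(1,0,0)$. The section $\Tn$ itself is a defining section of the exceptional divisor, with degree $(0,0,1)$. A direct degree computation now shows that the classical Plücker identities $T_{ij}T_{kl} - T_{ik}T_{jl} + T_{il}T_{jk} = 0$ on $G(2, c+d)$ pull back homogeneously in all cases except when $i, j \le c$ and $c+1 \le k, l$, where the first term has degree $(2,0,-1)$ while the other two have degree $(2,0,0)$; the unique correction restoring homogeneity is multiplication of the first monomial by $\Tn$, producing exactly the modified relation in the statement.

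The main obstacle, and the bulk of the proof, is to show that the quadratic relations above already suffice and that no further generators are needed, i.e.\ that $R := \C[T_{ij}, \Tn]/I$ is the full Cox ring $\RR(X)$ rather than a mere quotient of it. For this I would verify that $R$ is a normal integral domain of Krull dimension $\dim X + 3 = 2c + 2d - 2$, carrying the correct $\Cl(X)$-grading, and that the quasi-torus quotient of the semistable locus of $\mathrm{Spec}\,R$ by $\Hom(\Cl(X), \mathbb{G}_m)$ recovers $X$ together with its canonical divisor classes. On the chart $\Tn \neq 0$ the relations reduce (by setting $\Tn = 1$) to the ordinary Plücker ideal, exhibiting $R$ there as the cone over $G(2, c+d)$ with a refined $\Z^2$-grading; on the divisor $\Tn = 0$ the modified relations degenerate to the $2\times 2$ minor equations $T_{ik}T_{jl} - T_{il}T_{jk}$ on the mixed coordinates, yielding an explicit Segre-type geometric description of the exceptional locus. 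Combined with unique factoriality of the candidate Cox ring (forced by the graded structure and the explicit presentation), these two complementary descriptions would pin down $R = \RR(X)$.
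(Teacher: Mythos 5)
Your outline tracks the paper's: Thaddeus's description of $X(2,c,d)$ as a blow-up over the GIT quotient of $G(2,c+d)$ by the $\C^*$-action separating $V$ and $W$, the $\Z^3$-grading read off from the exceptional divisor and the two weight strata, and the homogenization of the Pl\"ucker relations by $\Tn$ exactly where the degrees fail to balance. (One slip: the birational morphism goes to the GIT quotient $X_1$, not to $G(2,c+d)$ itself -- the dimensions differ by one.) The gap sits in the part you yourself flag as the bulk of the work. You assert that unique factorization in $R$ is ``forced by the graded structure and the explicit presentation''; it is not -- it is precisely what has to be proved, and the natural route (Nagata's criterion: $R_{\Tn}$ is essentially a Pl\"ucker algebra, hence factorial, so it suffices that $\Tn$ be a prime element of $R$) requires showing that $J = I + \langle \Tn\rangle$ is a prime ideal. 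That primality is the technical heart of the paper (Proposition~\ref{prop:Tinftyprime}) and is not a formality: contrary to your description, setting $\Tn = 0$ does \emph{not} reduce the relations to the $2\times 2$ minors on the mixed coordinates alone -- all the unmodified Pl\"ucker relations in the other index ranges survive, so the exceptional locus is not simply a Segre/determinantal variety. The paper handles this by the substitution $T_{ij}\mapsto S_iS_j$ on mixed indices, which transports $\mathfrak{a}$ into a tensor product of the Pl\"ucker algebras of $G(2,c+1)$ and $G(2,d+1)$ and only then yields primality together with the dimension count $\dim V(J) = 2(c+d)-3$.

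Two further steps are missing. First, you never address why $I$ is radical; since all the geometric input concerns $V(I)$, a separate algebraic argument is needed (the paper uses $\sqrt{I} = I + \Tn^k\sqrt{I}$ for all $k$ together with Noetherianity and primality of $I_{\Tn}$) to exclude nilpotents along $\Tn = 0$. Second, ``verify that the quasi-torus quotient of the semistable locus of $\mathrm{Spec}\,R$ recovers $X$'' is itself a bundle of conditions -- the generators must define pairwise non-associated prime divisors, $X_\infty$ must meet the exceptional divisor in its big torus orbit, and the pullback on divisor class groups must be an isomorphism; the paper discharges all of this through the formalism of neat controlled ambient modifications and the cited theorem of Hausen, and your plan needs some substitute for that bookkeeping. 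Until the primality of $J$, the radicality of $I$, and this verification are supplied, the argument does not close.
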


This shows in particular that $X(2,c,d)$ has 
a finitely generated Cox ring, i.e.~it is a 
Mori dream space.
Moreover, the explicit knowledge of the Cox ring 
in terms of generators and relations opens an
access to the geometry; for 
example~\cite[Proposition~4.1]{Hausen} gives 
the following.

\begin{cor}
Let $c,d > 2$.
Set $w_1=(1,1,-1),$ $w_2=(1,0,0),$ $w_3=(1,-1,0),$ $w_4 = (0,0,1)$.
Then the cones of effective, movable and semiample 
divisor classes of $X = X(2,c,d)$ in $\Cl(X) = \Z^3$ 
are 
$$ 
\Eff(X) \ = \ \cone(w_1,w_3,w_4),
\qquad\qquad
\Mov(X) 
\ = \ 
\Sample(X)
\ = \ 
\cone(w_1,w_2,w_3).
$$
\end{cor}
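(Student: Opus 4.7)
My plan is to apply \cite[Proposition~4.1]{Hausen}, which reads off $\Eff(X)$, $\Mov(X)$ and $\Sample(X)$ for a Mori dream space $X$ directly from any homogeneous system of pairwise non-associated prime generators of its Cox ring. Theorem~\ref{thmone} supplies such a system: the classes of $T_\infty$ and the $T_{ij}$, with degrees $w_4$ and $w_1,w_2,w_3$ and multiplicities $1,\binom{c}{2},cd,\binom{d}{2}$ respectively. The hypothesis $c,d>2$ will be used precisely to ensure that each of $w_1,w_2,w_3$ is realised by at least three generators, whereas $w_4$ is realised only by $T_\infty$.

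First, for the effective cone, the proposition gives $\Eff(X)=\cone(w_1,w_2,w_3,w_4)$. Using the identity $w_2=\tfrac{1}{2}(w_1+w_3+w_4)$, I can drop $w_2$ from the list and obtain $\Eff(X)=\cone(w_1,w_3,w_4)$.

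Next, for the moving cone, the proposition expresses $\Mov(X)$ as the intersection, over all generators $f$, of the cones spanned by the degrees of the remaining generators. Since $w_1$, $w_2$ and $w_3$ each come from several generators, removing any single $T_{ij}$ does not alter the set of occurring rays; only removing $T_\infty$ does, namely by deleting the ray through $w_4$. Hence the intersection collapses to $\Eff(X)\cap\cone(w_1,w_2,w_3)=\cone(w_1,w_2,w_3)$.

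The main obstacle will be the semiample cone. It is the GIT-chamber inside $\Mov(X)$ singled out by the bunch of $X$-relevant orbit cones on $\mathrm{Spec}\,\mathcal{R}(X)$, and the corollary asserts that this chamber is already the full moving cone. To verify this I need to check that no interior wall subdivides $\cone(w_1,w_2,w_3)$; concretely, this amounts to inspecting the minimal $X$-relevant faces of the positive orthant attached to the presentation in Theorem~\ref{thmone} and showing that each of them projects, via the degree map, onto a cone whose interior contains the interior of $\cone(w_1,w_2,w_3)$. Carrying out this combinatorial check, which relies on the explicit shape of the quadratic relations in $I$, is the only nontrivial step in the proof.
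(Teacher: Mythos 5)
Your proposal follows exactly the paper's route: the paper derives this corollary solely by invoking \cite[Proposition~4.1]{Hausen} on the presentation of the Cox ring given in Theorem~\ref{thmone}, with no further argument supplied. Your explicit verifications for $\Eff(X)$ and $\Mov(X)$ (the identity $w_2=\tfrac12(w_1+w_3+w_4)$ and the observation that each of $w_1,w_2,w_3$ occurs with multiplicity at least $3$ once $c,d>2$, so deleting a single $T_{ij}$ never removes a ray) are correct, and the chamber check for $\Sample(X)$ that you outline but do not carry out is likewise left implicit in the paper.
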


\section{Proof of Theorem~\ref{thmone}}

By the main theorem of~\cite{Thaddeus},
the space $X(2,c,d)$ of complete 
rank~$2$ collineations is obtained by blowing 
up a suitable GIT quotient $X_1$ of the 
Grassmannian $G(2, V \oplus W)$ by the 
$\C^*$-action with the weights
one on $V$ and minus one on $W$.
Our idea is to realize this blow up as a 
neat controlled toric ambient modification 
in the sense of~\cite[Def. 5.4, 5.8]{Hausen}
and then verify that we are in the situation 
of~\cite[Thm.~5.9]{Hausen} which finally 
gives the Cox ring.

We work with the affine cone $\overline{X}$
over $G(2, V \oplus W)$.
It lies in the Pl\"ucker coordinate space 
$\overline{Z} := \bigwedge^2 V \oplus W$
and is defined by the Pl\"ucker relations
$$
P_{ijkl} \ := \ T_{ij}T_{kl}-T_{ik}T_{jl}+T_{il}T_{jk},
\qquad 1 \le i<j<k<l \le c+d.
$$
Note that $\overline{X}$ is invariant under the 
action of the 2-torus $\T^2 = \C^* \times \C^*$ 
on the Pl\"ucker space $\overline{Z}$ given by the 
following weight matrix $Q$, where 
$a^+ := \genfrac{(}{)}{0pt}{}{c}{2}$
and $a^0 := cd$ and 
$a^- := \genfrac{(}{)}{0pt}{}{d}{2}$:
\begin{eqnarray*}
Q
& := &
\begin{bmatrix}
1&\ldots&1&1&\ldots&1&\hphantom{-}1&\ldots&\hphantom{-}1
\\
1&\ldots&1&0&\ldots&0&-1&\ldots&-1
\end{bmatrix}
\\[-12pt]
& &
\begin{matrix} 
\;\;\; \underbrace{\phantom{mmmml}}_{a^+}  
& 
\;\underbrace{\phantom{mmmmll}}_{a^0} 
& 
\underbrace{\phantom{mmmmmml}}_{a^-}
&
\;\; \end{matrix}
\end{eqnarray*}
According to~\cite[Prop.~2.9]{BerHaus}, 
the GIT-quotients of the $\T^2$-action correspond to 
the cones of the GIT-fan in the rational character space 
$\X_\Q(\T^2) = \Q^2$.

\begin{prop}
The $\T^2$-actions on $\overline{Z}$ and $\overline{X}$ have
the same GIT-fan $\Lambda$; it has 
the maximal cones
$\lambda_1 := \cone((1,1),(1,0))$ 
and $\lambda_2 := \cone((1,0),(1,-1))$.
\end{prop}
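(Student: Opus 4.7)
The plan is to invoke \cite[Prop.~2.9]{BerHaus}, which identifies the GIT-fan of a torus action with the coarsest common refinement of the orbit cones $\omega(\bar z):=\cone(Q_{ij}:T_{ij}(\bar z)\neq 0)$ over points $\bar z$. Since the weight matrix $Q$ has only three distinct columns, namely $(1,1)$, $(1,0)$ and $(1,-1)$, every orbit cone is a subcone of $\cone((1,1),(1,-1))$.

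First I would verify the statement for the ambient affine space $\overline Z$: there every subset of the three column types is realized by some point, so the orbit cones are exactly the eight cones generated by subsets of $\{(1,1),(1,0),(1,-1)\}$. Their coarsest common refinement is the fan $\Lambda$ with maximal cones $\lambda_1,\lambda_2$, obtained by cutting $\cone((1,1),(1,-1))$ along the ray $\cone((1,0))$. For $\overline X\subseteq\overline Z$ the orbit cones form a subset of those for $\overline Z$, so its GIT-fan is a (possibly strict) coarsening of $\Lambda$, and the task reduces to realizing $\lambda_1$ and $\lambda_2$ themselves as orbit cones of $\overline X$.

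Writing $e_1,\ldots,e_c$ and $f_1,\ldots,f_d$ for bases of $V$ and $W$, I would pick the 2-plane $P_1\subseteq V\oplus W$ spanned by $e_1+f_1$ and $e_2$. A direct calculation of the Pl\"ucker coordinates yields $T_{12}\neq 0$ (weight $(1,1)$) and $T_{2,c+1}\neq 0$ (weight $(1,0)$), while every $T_{ij}$ with $c+1\le i<j$ vanishes because $P_1$ projects onto the one-dimensional subspace $\C f_1$ of $W$; hence $\omega(P_1)=\lambda_1$. Symmetrically, the 2-plane spanned by $e_1+f_1$ and $f_2$ has orbit cone $\lambda_2$. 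Since $\lambda_1$ and $\lambda_2$ share only the common face $\cone((1,0))$ and no column of $Q$ lies in their interiors, no finer subdivision is forced by any further orbit cone of $\overline X$, so the two GIT-fans coincide. The main obstacle is essentially bookkeeping of Pl\"ucker coordinates for these specific 2-planes; the argument only requires $c,d\ge 2$, which is amply covered by the standing hypothesis $c,d>2$.
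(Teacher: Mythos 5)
Your proposal is correct and follows essentially the same route as the paper: both reduce to observing that the GIT-fan of $\overline{Z}$ is $\Lambda$ and that it then suffices to realize $\lambda_1$ and $\lambda_2$ as orbit cones of explicit points of $\overline{X}$. The paper simply writes down the Pl\"ucker coordinates directly (e.g.\ $x_{1,2}=x_{1,c+d}=1$, all else zero, coming from the decomposable vector $e_1\wedge(e_2+f_d)$) rather than starting from a $2$-plane, but the witnesses play the identical role.
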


\begin{proof}
In general, the GIT-fan of a $T$-action on 
an affine variety $\overline{Y}$ consists of the 
GIT-chambers $\lambda(w) \subseteq \X_\Q(T)$, 
where $w \in \X_\Q(T)$.
Each such GIT-chamber is defined as 
$$ 
\lambda(w) 
\ := \ 
\bigcap_{\genfrac{}{}{0pt}{}{y \in \overline{Y},}{w \in \omega_y}} \omega_y,
\qquad
\omega_y \ := \ \cone(\deg(f); 
\; f \in \mathcal{O}(\overline{Y}),
\; f(y) \ne 0).
$$
From this we directly see that $\Lambda$ is the 
GIT-fan of the $\T^2$-action on $\overline{Z}$.
To obtain this as well for the $\T^2$-action on $\overline{X}$,
it is sufficient to show, that $\lambda_1, \lambda_2$ 
are of the form $\omega_{x}$ with suitable 
$x \in \overline{X}$.
For $\lambda_1$ take the point $x$ with the Pl\"ucker 
coordinates $x_{1,2}=x_{1,c+d}=1$ and $x_{i,j}=0$ else;
for $\lambda_2$ take $x$ with $x_{1, c+d}=x_{c+d-1, c+d}=1$ 
and $x_{i,j}=0$ else.
\end{proof}

We denote by $p_i \colon \widehat{Z}_i \to Z_i$
and $p_i \colon \widehat{X}_i \to X_i$
the GIT quotients corresponding to the 
chamber $\lambda_i \in \Lambda$.
With any $w_i$ taken from the 
relative interior of~$\lambda_i$, we realize
$\widehat{Z}_i$ and $\widehat{X}_i$ as sets of 
semistable points
$$
\widehat{X}_i 
\ = \ 
\overline{X}^{ss}(w_i), 
\qquad \qquad 
\widehat{Z}_i 
\ = \ 
\overline{Z}^{ss}(w_i).
$$
Note that $\widehat{Z}_i$, $Z_i$ are toric varieties
and $\widehat{Z}_i \to Z_i$ is a toric morphism. 
Moreover, we have $\widehat{X}_i = \widehat{Z}_i \cap 
\overline{X}$, the quotient map $\widehat{X}_i \to X_i$
is the restriction of $\widehat{Z}_i \to Z_i$ (which justifies 
denoting both of them by $p_i$) and the 
induced maps $X_i \to Z_i$ are closed embeddings.

\begin{rem}
\label{rem:neatquotemb}
Let $c > 2$.
Then the quotients $\widehat{Z}_1 \to Z_1$ and $\widehat{X}_1 \to X_1$
are characteristic spaces, i.e.~relative spectra of Cox sheaves,
see~\cite{ArDeHaLa}.
Moreover, the embedding $X_1 \to Z_1$ is neat in the sense that 
the toric prime divisors of $Z_1$ cut down to prime divisors 
on $X_1$ and the pullback $\Cl(Z_1) \to \Cl(X_1)$ on the level
of divisor class groups is an isomorphism,
see~\cite[Def.~2.5, Prop.~3.14]{Hausen}.
For $d > 2$, one has analogous statements 
on $Z_2$ and $X_2$. 
\end{rem}

\begin{rem}
\label{remmatrix}
The fan $\Sigma_i$ of $Z_i$ is obtained from $\lambda_i$
via linear Gale duality, see for 
example~\cite[Sec~II.2]{ArDeHaLa}. 
The rays of $\Sigma_i$ are generated by the columns 
of any matrix $P$ having the rows of $Q$ as a 
basis of its nullspace; we take 
\begin{align*}
P \ := \
\begin{bmatrix*}
D(a^+)&0&0  \\
0&D(a^0)&0  \\
0&0& D(a^-) \\
A^+&A^0&A^- \\
\end{bmatrix*},
\quad 
\text{where } D(k)\;:= \;
\begin{bmatrix*}[r]
1&-1& &0\\
&\ddots& \ddots &  \\
0&&  1&-1 \\
\end{bmatrix*}\\[-12pt]
\begin{matrix} \underbrace{\phantom{mmm}}_{a^+}\quad \underbrace{\phantom{mmm}}_{a^0}\quad \underbrace{\phantom{mmm}}_{a^-}\qquad \qquad\qquad\qquad\qquad\underbrace{\phantom{mmmmmlmmm}}_{k}\;\; \end{matrix}
\\
\text{and } 
A^+ := (0,\ldots,0,1),
\quad 
A^0 := (-1,0,\ldots,0,-1),
\quad 
A^-:= (1,0,\ldots,0).
\end{align*}
Each maximal cone of $\Sigma_1$ is obtained 
by removing one column from inside the $a^+$-block,
one from outside the $a^+$-block and then taking
the cone over the remaining ones. 
Similarly, for the maximal cones of $\Sigma_2$, 
remove one column from inside the $a^-$-block 
and one from outside. 
\end{rem}

We perform a toric blow up of $Z_1$.
Consider the cone $\sigma_1 \in \Sigma_1$ 
generated by the last $a^-$ colums of $P$
and the barycentric subdivision $\Sigma_\infty$ 
of $\Sigma_1$ at $\sigma_1$; that means that 
we insert the ray $\varrho_\infty$ through 
$(0,\ldots,0,1)$.
Then $\Sigma_\infty \to \Sigma_1$ defines 
a toric blow up $Z_\infty \to Z_1$ centered at
the toric orbit closure $\overline{\orb(\sigma_1)}$ 
corresponding to $\sigma_1$.
Let $X_{\infty} \subseteq Z_\infty$ be the 
proper transform of $X_1 \subseteq Z_1$ under 
this map.

\begin{prop}
The variety $X_{\infty}$ is the space of rank-2 
complete collineations.
\end{prop}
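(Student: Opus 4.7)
The plan is to combine Thaddeus's description of $X(2,c,d)$ as a single blow-up of $X_1$ with an explicit identification of the blow-up centre in Pl\"ucker coordinates. Concretely, I would first describe the subvariety $X_1 \cap \overline{\orb(\sigma_1)}$ geometrically. In Pl\"ucker coordinates on $Z_1$, the orbit closure $\overline{\orb(\sigma_1)}$ is cut out by the vanishing of all $T_{ij}$ with $c+1 \le i < j \le c+d$. For a 2-plane $L \subseteq V \oplus W$ spanned by $v_k + w_k$ with $v_k \in V$, $w_k \in W$, this amounts to $w_1 \wedge w_2 = 0$, i.e.~the projection $L \to W$ has rank at most one. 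Under the identification of a dense open of $X_1$ with pairs $(V_2,[f])$ consisting of a 2-dimensional subspace $V_2 \subseteq V$ and a rank-two linear map $f \colon V_2 \to W$ up to scalar (via the graph construction), this means that $X_1 \cap \overline{\orb(\sigma_1)}$ is precisely the closure of the locus of rank-deficient collineations.

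Next, I would invoke the main theorem of~\cite{Thaddeus}, which realises $X(2,c,d)$ as the blow-up of $X_1$ along exactly this rank-deficient locus. Together with the first step, this gives an abstract isomorphism $X(2,c,d) \cong \mathrm{Bl}_{X_1 \cap \overline{\orb(\sigma_1)}} X_1$.

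Finally, I would identify the proper transform $X_\infty \subseteq Z_\infty$ with this blow-up. By Remark~\ref{rem:neatquotemb}, the embedding $X_1 \hookrightarrow Z_1$ is neat; the star-subdivision $\Sigma_\infty \to \Sigma_1$ inserts the single ray $\varrho_\infty$ through $(0,\ldots,0,1)$ inside $\sigma_1$; and the notion of a neat controlled modification~\cite[Def.~5.4, 5.8]{Hausen} was designed precisely to ensure that the proper transform of a neatly embedded subvariety under such an ambient toric blow-up equals the scheme-theoretic blow-up along the intersection with the centre. Applied in our setting, this would yield $X_\infty \cong \mathrm{Bl}_{X_1 \cap \overline{\orb(\sigma_1)}} X_1 \cong X(2,c,d)$.

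The main obstacle will be verifying the hypotheses of the controlled modification, namely that $X_1$ meets $\overline{\orb(\sigma_1)}$ in the expected codimension~$a^-$ and that the preimage of this intersection in $X_\infty$ is a single prime divisor (the one cut out by $T_\infty$). Using the explicit description of $\Sigma_1$ from Remark~\ref{remmatrix} together with the defining Pl\"ucker ideal, this should reduce to a combinatorial calculation in the orbit stratification of $Z_1$; step one, by contrast, is a direct Pl\"ucker-coordinate computation, and step two is a direct citation.
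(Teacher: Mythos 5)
Your overall strategy -- identify the centre $X_1\cap\overline{\orb(\sigma_1)}$ geometrically as the rank-degeneracy locus, match it with the centre of Thaddeus's blow-up, and then use that the proper transform of $X_1$ in the ambient toric blow-up is the blow-up of $X_1$ along the intersection with the centre -- is viable and genuinely different from the paper. The paper never describes either centre geometrically: it observes that $\widehat{Z}_1\setminus\widehat{Z}_2$ is the toric orbit closure of $\widehat{\sigma}_1$ with $P(\widehat{\sigma}_1)=\sigma_1$, so that $\overline{\orb(\sigma_1)}=p_1(\widehat{Z}_1\setminus\widehat{Z}_2)$, and then identifies this with Thaddeus's centre $p_1(\widehat{X}_1\setminus\widehat{X}_2)$ purely formally, using that $p_1$ is a good quotient and that $\overline{X}$ and $\widehat{Z}_1\setminus\widehat{Z}_2$ are closed and invariant, whence $p_1(\overline{X}\cap(\widehat{Z}_1\setminus\widehat{Z}_2))=X_1\cap p_1(\widehat{Z}_1\setminus\widehat{Z}_2)$. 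That two-line computation replaces your entire first step and avoids having to match a coordinate description against the precise form of Thaddeus's statement.

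Two points in your plan would, however, cause trouble as written. First, the ``hypothesis'' that $X_1$ meets $\overline{\orb(\sigma_1)}$ in codimension $a^-=\binom{d}{2}$ is false: on the dense chart $X_1\cong\PP\Hom(S_V,W)$ the intersection is the locus of maps of rank $\le 1$, which has codimension $d-1$ in $X_1$, not $\binom{d}{2}$ (these agree only for $d=2$, which is excluded). Fortunately no such properness is needed: for any closed subvariety $Y$ of $Z$ not contained in the centre $C$, the strict transform of $Y$ in the blow-up of $Z$ along $C$ is the blow-up of $Y$ along the scheme-theoretic intersection $Y\cap C$, with no condition on codimensions; here smoothness of $X_1$ and of the centre (established in the next proposition) guarantees that this scheme-theoretic intersection is the reduced rank-degeneracy locus, i.e.\ Thaddeus's centre. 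Second, you are attributing this strict-transform fact to the neat controlled modification machinery of \cite[Def.~5.4, 5.8]{Hausen}. Those notions do not assert it; they are the hypotheses under which the \emph{Cox ring} can be transferred along the modification, and they are only verified later in the paper (Proposition~\ref{neatcontr}, which relies on Propositions~\ref{prop:Tinftyprime} and~\ref{propiso}). Leaning on them here both invokes the wrong tool and defers to machinery that is not yet available at this stage of the argument; the elementary strict-transform statement is all that is required.
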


\begin{proof}
The closed subset $\widehat{Z}_1 \setminus \widehat{Z}_2$
of $\widehat{Z}_1$ is the toric orbit closure 
corresponding to the cone $\widehat{\sigma}_1$ 
in $\Q^{a^++a^0+a^-}$ generated by the last 
$a^-$ coordinate axes. 
Note that we have $P(\widehat{\sigma}_1)=\sigma_1$.
Consequently, we obtain 
$$
\overline{\orb(\sigma_1)}
\ = \
p_1(\overline{\orb(\widehat{\sigma}_1)})
\ = \
p_1(\widehat{Z}_1\setminus \widehat{Z}_2).
$$
Hence $X_{\infty}$ is the blow up of $X_1$ at
$X_1 \cap p_1(\widehat{Z}_1 \setminus \widehat{Z}_2)$. 
From~\cite[Sec.~3.3]{Thaddeus} we know, that 
the space of complete collineations is obtained 
from $X_{1}$ by blowing up the subvariety 
$p_{1}(\widehat{X}_{1}\setminus \widehat{X}_{2})$. 
Since $p_1 \colon \widehat{Z}_1 \to Z_1$ is a good 
quotient and $\overline{X}$, 
$\widehat{Z}_1 \setminus \widehat{Z}_2$ are closed 
invariant subsets, we obtain
$$
p_{1}(\widehat{X}_{1}\setminus \widehat{X}_{2})
\ = \
p_1(\overline{X}\cap(\widehat{Z}_1 \setminus \widehat{Z}_2))
\ = \
X_1 \cap p_1(\widehat{Z}_1 \setminus \widehat{Z}_2).
$$
\end{proof}

\begin{prop}
The varieties $X_1,X_\infty$ and the center of 
$\pi_1 \colon X_\infty \to X_1$ are smooth.
\end{prop}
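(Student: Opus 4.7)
The plan is to prove smoothness of each of $X_1$, the center, and $X_\infty$ by descending smoothness from the $\T^2$-covers in $\overline{X}$ via the GIT quotient map $p_1$, and then invoking the previous proposition's identification of $\pi_1\colon X_\infty\to X_1$ as a blow-up along the center.

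\textbf{Smoothness of $X_1$.} I would first note that $0\in\overline{X}$ is the unique $\T^2$-fixed point and is not $w_1$-semistable (no weight of $Q$ lies in the interior of $\lambda_1$ by itself), so $\widehat{X}_1\subseteq\overline{X}\setminus\{0\}$. The latter is smooth as the affine cone over the Grassmannian $G(2,V\oplus W)$ with its vertex removed, hence $\widehat{X}_1$ is smooth. Then I would check freeness of the $\T^2$-action on $\widehat{X}_1$: the available Plücker weights are $(1,1)$ on $V\wedge V$, $(1,0)$ on $V\wedge W$, and $(1,-1)$ on $W\wedge W$; a direct stabilizer computation shows freeness except possibly on points having only $V\wedge V$ and $W\wedge W$ coordinates nonzero, which would give a $\Z/2$-stabilizer. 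But such points are ruled out by the Plücker relation $P_{ijkl}$ with $i<j\le c<k<l$, which then forces some $V\wedge W$ coordinate to be nonzero. Hence $p_1$ is a principal $\T^2$-bundle on $\widehat{X}_1$ and $X_1$ is smooth.

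\textbf{Smoothness of the center.} By the preceding proposition, the center is $p_1(\widehat{C})$ with $\widehat{C}:=\widehat{X}_1\cap(\widehat{Z}_1\setminus\widehat{Z}_2)$, i.e.\ the $w_1$-semistable locus of $\overline{X}$ where all $W\wedge W$ Plücker coordinates vanish. Points of $\widehat{C}\setminus\{0\}$ correspond to 2-planes $L\subseteq V\oplus W$ with $L\cap V\ne 0$, i.e.\ to points over the Schubert variety $Y=\{L:L\cap V\ne 0\}\subset G(2,V\oplus W)$. This $Y$ is singular along $G(2,V)=\{L\subseteq V\}$, but $w_1$-semistability on $\widehat{C}$ forces both a $V\wedge V$ and a $V\wedge W$ coordinate to be nonzero, which translates to $L\cap V$ being a line and $L\not\subseteq V$; these cut out exactly the smooth stratum $Y\setminus G(2,V)$. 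That stratum is an open subset of the projective bundle parameterizing pairs $(\ell,L)$ with $\ell\subseteq L\cap V$, fibered over $\PP(V)$, hence smooth. So $\widehat{C}$ is a $\C^*$-bundle over a smooth variety and is smooth, and the same kind of stabilizer analysis as above shows the $\T^2$-action on $\widehat{C}$ is free. Therefore $p_1(\widehat{C})$ is smooth.

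\textbf{Smoothness of $X_\infty$.} The preceding proposition and its proof show that $\pi_1\colon X_\infty\to X_1$ is the blow-up of $X_1$ along the center. Since both $X_1$ and the center are smooth by the previous two steps, $X_\infty$ is smooth as the blow-up of a smooth variety along a smooth subvariety.

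The main obstacle is the center-smoothness step: one has to use $w_1$-semistability carefully to avoid the singular locus $G(2,V)$ of the ambient Schubert variety $Y$ and land inside its smooth stratum. The only other subtle point is the Plücker-relation argument in the $X_1$ step that eliminates the one potential source of finite nontrivial stabilizers.
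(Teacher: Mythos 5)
Your proposal is correct, and for $X_1$ and $X_\infty$ it follows essentially the same route as the paper: the paper also covers $\widehat{Z}_1$ by the affine charts $\overline{Z}_{T_{ij}T_{kl}}$, uses the Pl\"ucker relation $P_{ijkl}$ (with $i<j\le c<k<l$) to show that $\widehat{X}_1$ lies in the charts where a $\bigwedge^2 V$-coordinate and a mixed $V\otimes W$-coordinate are simultaneously nonzero, concludes that $\T^2$ acts freely there so that $X_1$ inherits smoothness, and then (implicitly) gets smoothness of $X_\infty$ from the blow-up description. Your identification of the one potentially bad stabilizer ($\Z/2$ on points with only $\bigwedge^2V$- and $\bigwedge^2W$-coordinates nonzero) and its elimination via the Pl\"ucker relations is exactly the content of the paper's phrase ``$\widehat{X}_1$ is contained in the first union.'' Where you genuinely diverge is the center: the paper writes down $p_1^{-1}(C)=\overline{\orb(\widehat{\sigma}_1)}\cap\overline{X}$ as the explicit zero locus obtained by setting the $\bigwedge^2W$-coordinates to zero in the Pl\"ucker relations and verifies smoothness by a Jacobian computation, whereas you identify $\widehat{C}$ geometrically as the punctured cone over the open stratum $\{L:\dim(L\cap V)=1,\ L\not\subseteq V\}$ of the Schubert variety $\{L:L\cap V\ne 0\}$, which is smooth as an open piece of the incidence variety over $\PP(V)$. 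Your version is coordinate-free and explains \emph{why} the center is smooth (semistability excises precisely the singular locus $G(2,V)$ of the Schubert variety), at the cost of importing standard facts about Schubert strata; the paper's Jacobian check is less illuminating but entirely self-contained and mechanical. Both are valid, and your explicit final step (blow-up of a smooth variety along a smooth center is smooth) fills in what the paper leaves tacit.
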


\begin{proof}
This is a result of Vainsencher, see~\cite[Theorem~1]{Vain}. 
We give a simple alternative proof.
By Remark~\ref{remmatrix}, the set of semistable 
points $\widehat{Z}_1$ is covered 
by open affine $p_1$-saturated sets as follows
$$
\widehat{Z}_1 
\ = \
\bigcup_{i<j \leq c,\; k \leq c < l} \overline{Z}_{T_{ij}T_{kl}} 
\cup 
\bigcup_{i<j \leq c <m < n} \overline{Z}_{T_{ij}T_{mn}}.
$$
By the nature of the Pl\"ucker relations,
$\widehat{X}_1$ is smooth and it is contained 
in the first union.
There the torus $\T^2$ acts freely and thus 
the quotient $X_1$ inherits smoothness. 
The inverse image 
$p_1^{-1}(C) = \overline{\orb(\widehat{\sigma}_1)} \cap \overline{X}$ 
of the center 
$C = \overline{\orb(\sigma_1)} \cap X_1$
of $\pi_1$ is explitly given by cutting down
the Pl\"ucker relations.
Calculating the Jacobian of the resulting
equations shows that $p_1^{-1}(C)$ is smooth.
It follows that $C = p_1^{-1}(C) / \T^2$
is smooth.
\end{proof}

\begin{rem}
\label{rem:c2ord2}
For $c > 2$ we obtain $X(2,c,2) \cong X_1$
and 
for $d > 2$ we obtain $X(2,2,d) \cong X_2$.
In both cases Remark~\ref{rem:neatquotemb}
tells us that the Cox ring is $\C[\overline{Z}]$
modulo the Pl\"ucker relations $P_{ijkl}$ endowed
with the $\Z^2$-grading given by the matrix $Q$.
Finally, the Cox ring of $X(2,2,2) = \PP^3$ is 
$\C[T_0,T_1,T_2,T_3]$ with the standard $\Z$-grading.
\end{rem}

From now on we assume $c,d > 2$.
Consider the toric characteristic space 
$p_\infty \colon \widehat{Z}_\infty \to Z_\infty$
obtained via Cox's construction, see~\cite{Co} 
and~\cite[Sec.~II.1.3]{ArDeHaLa}.
Then $\widehat{Z}_\infty$ is an open toric subvariety 
of $\overline{Z}_\infty = \overline{Z} \times \C$.
We denote the additional coordinate by $x_\infty$;
it corresponds to the new ray $\varrho_\infty$.
Moreover, $p_\infty \colon \widehat{Z}_\infty \to Z_\infty$ 
is a good quotient for the $\T^3$-action on 
$\overline{Z}_\infty$ given by the weight matrix
\begin{eqnarray*}
\label{eqgewichte}
Q_\infty
& := &
\begin{bmatrix}
\hphantom{-}1&\ldots&\hphantom{-}1&\hphantom{-}1&\ldots&\hphantom{-}1&\hphantom{-}1&\ldots&\hphantom{-}1&0
\\
\hphantom{-}1&\ldots&\hphantom{-}1&\hphantom{-}0&\ldots&\hphantom{-}0&-1&\ldots&-1&0
\\
-1&\ldots&-1&\hphantom{-}0&\ldots&\hphantom{-}0&\hphantom{-}0&\ldots&\hphantom{-}0&1
\end{bmatrix}
\\[-12pt]
& &
\begin{matrix} 
\;\;\; 
\underbrace{\phantom{mmmmmml}}_{a^+}  
& 
\;\underbrace{\phantom{mmmmmml}}_{a^0} 
& 
\underbrace{\phantom{mmmmmml}}_{a^-}
& 
\;\; 
\end{matrix}
\end{eqnarray*}
Now set $\widehat{X}_\infty := p_\infty^{-1}(X_\infty)$
and write $\overline{X}_\infty$ for the closure 
of $\widehat{X}_\infty$ in $\overline{Z}_\infty$.
Then we arrive at the following commutative diagram:
\begin{center}
\label{diagkomplett}
\begin{tikzpicture}[
        back line/.style={densely dotted},
        normal line/.style={-stealth},
        dash line/.style={dashed},
        cross line/.style={normal line,
           preaction={draw=white, -, 
           line width=6pt}}
    ]
    \matrix (m) [matrix of math nodes, 
         row sep=1.7em, column sep=1.7em,
         text height=1ex, 
         text depth=0.25ex]{
         &\overline{Z}_{\infty}& &\overline{Z}\\
         \overline{X}_{\infty} & &\overline{X}\\
         &Z_{\infty}&&Z_{1}\\
         X_{\infty}   &       & X_{1}   \\
    };
    \path[normal line]
        (m-1-2) edge node[above left]{$\overline{\pi}_{1}$}(m-1-4)
                edge [dash line] node[above left]{$p_{\infty}$} (m-3-2)
        (m-1-4) edge [dash line] node[above right]{$p_{1}$}(m-3-4)
        (m-3-2) edge node[above left]{$\pi_{1}$}(m-3-4)
        (m-2-1) edge [dash line](m-4-1)
                edge (m-2-3)
        (m-2-3) edge [dash line] (m-4-3)
        (m-4-1) edge (m-4-3)
               ;
     \path[right hook->] 
        (m-2-1) edge (m-1-2)
        (m-2-3) edge (m-1-4)
        (m-4-1) edge (m-3-2)
        (m-4-3) edge (m-3-4)
        ;
\end{tikzpicture}
\end{center}
The lifting
$\overline{\pi}_1 \colon \overline{Z}_\infty \to \overline{Z}$ 
is the good quotient for the $\C^*$-action 
on $\overline{Z}_\infty$ having weight $1$ 
at the $x_{i,j}$ with $i,j \leq c$, 
weight $-1$ at $x_\infty$ and weight $0$ else,
see~\cite[Lemma~5.3]{Hausen}.
Its comorphism is given by
\begin{eqnarray*}
\overline{\pi}_1^*(T_{ij})
& = & 
\begin{cases}
T_{\infty}T_{ij} 
&
\text{if } i,j \leq c,
\\ 
\phantom{T_{\infty}}T_{ij} 
&
\text{else}.
\end{cases}
\end{eqnarray*} 
Pulling back the Pl\"ucker relations 
from $\overline{Z}$ to $\overline{Z}_\infty$ 
via $\overline{\pi}_1$ and canceling 
by $\Tn$ whenever possible 
yields the following relations:
\begin{align*}
\Tn T_{ij}T_{kl}-T_{ik}T_{jl}+T_{il}T_{jk} 
&
\quad \text{for } 1 \le i < j < k < l \le c+d 
\text{ with } i,j \le c \text{ and } \ c+1 \le k,l,
\\
\phantom{\Tn}T_{ij}T_{kl}-T_{ik}T_{jl}+T_{il}T_{jk} 
&
\quad \text{for } 1 \le i < j < k < l \le c+d \text{ different from above}. 
\end{align*}
Let $I \subseteq \C[T_{ij},T_\infty\; 1 \le i < j \le c+d]$ 
denote the ideal generated by these polynomials.
The following is essential for showing that 
$R := \C[T_{ij},T_\infty\; 1 \le i < j \le c+d]/I$ 
is the Cox ring of $X_\infty$.

\begin{prop}
\label{prop:Tinftyprime}
The ideal $J := I + \langle T_\infty \rangle 
\subseteq \C[T_{ij},T_\infty; \; 1 \le i < j \le c+d]$
is prime and its zero set $V(J) \subseteq \overline{Z}_\infty$ 
is irreducible of dimension $2(c+d)-3$.
\end{prop}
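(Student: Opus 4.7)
The plan is to reduce modulo $\Tn$, parametrize the quotient variety by an irreducible one, and argue primality by a Gröbner basis argument. Since $\Tn \in J$, we have $\C[T_{ij}, \Tn]/J \cong \C[T_{ij}]/J'$, where $J' \subseteq \C[T_{ij}]$ is generated by the $2\times 2$ minor relations $T_{ik}T_{jl} - T_{il}T_{jk}$ on the cross block (indices $i{<}j \le c < k{<}l$, obtained by setting $\Tn = 0$ in the 2-2 relations of $I$) together with the standard Plücker polynomials $T_{ij}T_{kl} - T_{ik}T_{jl} + T_{il}T_{jk}$ for the remaining $i{<}j{<}k{<}l$. Thus $V(J) \subset \overline{Z}_\infty$ is isomorphic as a scheme to $V(J') \subset \C^{\binom{c+d}{2}}$, and it suffices to show $J'$ is prime of the claimed dimension.

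Set $A := \{1,\dots,c\}$, $B := \{c{+}1,\dots,c{+}d\}$, $V := \C^c$, $W := \C^d$, and let
\[ F := \bigl\{(\omega_+, u) \in \textstyle\bigwedge^2 V \times V : \omega_+ \text{ decomposable},\ u \wedge \omega_+ = 0\bigr\}, \]
with $G$ defined analogously for $W$. Since $F$ is the image of the morphism $V \times V \times \C^2 \to \bigwedge^2 V \times V$, $(e, f, \alpha, \beta) \mapsto (e \wedge f, \alpha e + \beta f)$, it is irreducible of dimension $2c-1$; likewise $\dim G = 2d-1$, and $Y := F \times G$ is irreducible of dimension $2(c+d)-2$. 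Define
\[ \psi \colon Y \to V(J'), \quad \psi(\omega_+, u, \omega_-, v)_{ij} = \begin{cases} (\omega_+)_{ij}, & i,j \in A, \\ u_i\, v_j, & i \in A,\ j \in B, \\ (\omega_-)_{ij}, & i,j \in B. \end{cases} \]
That $\psi$ lands in $V(J')$ is a direct check: decomposability of $\omega_\pm$ yields the 4-in-$A$ and 4-in-$B$ Plücker relations; for $i{<}j{<}k \in A$, $l \in B$ the 3-1 Plücker relation factors as $v_l \cdot [u \wedge \omega_+]_{ijk} = 0$ (and symmetrically for 1-3); and the 2-2 minor relations hold since the cross block has rank $\le 1$. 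Conversely, for $z \in V(J')$, let $\omega_\pm$ be the bivectors given by the plus/minus blocks (decomposable by the unchanged Plücker relations), factor the rank-$\le 1$ cross block as $uv^T$, and read $u \wedge \omega_+ = 0$ and $v \wedge \omega_- = 0$ off the 3-1 and 1-3 relations (the case of zero cross block is handled by taking $u = v = 0$). Hence $\psi$ is surjective, and its generic fibre is the 1-dimensional $\C^*$-orbit $(u,v) \sim (\lambda u, \lambda^{-1} v)$, so $V(J') = \psi(Y)$ is irreducible of dimension $2(c+d) - 3$.

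With $V(J)$ irreducible of the claimed dimension, $J$ is prime if and only if it is radical. I would prove radicality of $J'$ by a Gröbner basis argument: equip $\C[T_{ij}]$ with a DegRevLex order in which the $T_{ij}$ are ordered anti-diagonally, so that the leading term of each non-2-2 Plücker relation is the extremal squarefree product $T_{ij}T_{kl}$, and the leading term of each 2-2 minor $T_{ik}T_{jl} - T_{il}T_{jk}$ is a squarefree cross-block monomial. Using Buchberger's criterion one verifies the given generators form a Gröbner basis of $J'$, so $\mathrm{in}(J')$ is a squarefree monomial ideal and hence radical; radicality of $\mathrm{in}(J')$ propagates to $J'$ by the standard initial-term reduction (any element of $\sqrt{J'}$ has initial form in $\mathrm{in}(J') = \sqrt{\mathrm{in}(J')}$, hence reduces to zero modulo the Gröbner basis). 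The main obstacle is the Buchberger verification itself: the binomial 2-2 relations create $S$-pairs with the trinomial Plücker relations that do not appear in the classical $G(2, c{+}d)$ straightening, and tracing their reductions back to standard form is the technical heart. An essentially equivalent alternative is to exhibit a standard monomial basis of $\C[T_{ij}]/J'$ adapted from Hodge's basis for the Plücker algebra.
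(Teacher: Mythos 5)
Your reduction modulo $T_\infty$ and the parametrization $\psi\colon Y\to V(J')$ are correct, and they do establish the set-theoretic half of the proposition: $V(J)$ is irreducible of dimension $2(c+d)-3$. (The fibre and dimension counts for $F$, $G$ and for the generic $\C^*$-fibre of $\psi$ all check out, and the case analysis for surjectivity is complete.) The genuine gap is primality of $J$ itself. As you say, given irreducibility of $V(J')$ this reduces to showing that $J'$ is \emph{radical}, and for that you offer only a strategy: choose a term order so that the given quadrics have squarefree leading terms, verify via Buchberger that they form a Gr\"obner basis, and conclude that $\mathrm{in}(J')$ is a squarefree monomial ideal. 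You explicitly leave the Buchberger verification -- which you yourself identify as ``the technical heart'' -- undone. This is precisely the step that cannot be waved at: $J'$ is not the classical Pl\"ucker ideal but a degeneration of it (some trinomials replaced by binomials), so neither the straightening law for $G(2,n)$ nor the standard monomial basis applies off the shelf, and it is not a priori clear that the mixed $S$-pairs between the binomial $2$-$2$ relations and the trinomial relations reduce to zero. Without that verification the proof establishes only that $\sqrt{J}$ is prime, not that $J$ is.

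For comparison, the paper avoids the issue entirely by an algebraic substitution rather than a Gr\"obner computation: the Segre-type map $T_{ij}\mapsto S_iS_j$ on the cross block identifies $\C[T_{ij}]/J'$ with the degree-zero part of $B'/\mathfrak{b}'\otimes_\C B''/\mathfrak{b}''$, where $\mathfrak{b}'$ and $\mathfrak{b}''$ are honest Pl\"ucker ideals of $G(2,c+1)$ and $G(2,d+1)$; primality (not merely radicality) then follows from primality of Pl\"ucker ideals plus integrality of a tensor product of domains over $\C$, and the dimension count comes along for free. If you want to keep your geometric parametrization, the cleanest repair is to graft on this substitution argument (or, alternatively, to invoke that $J'$ is the initial ideal of the Pl\"ucker ideal at a weight lying in the tropical Grassmannian of trees, for which primality is known) -- but some such input is needed; the proposal as written does not close the radicality step.
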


\begin{proof}
We first show that the ideal  
$J = I + \langle T_\infty \rangle$
is prime.
Obviously, it is generated by the variable 
$T_\infty$ and the polynomials
\begin{eqnarray*}
\label{eqbinome}
g_{ijkl} 
& 
:= 
&
\phantom{T_{ij}T_{kl}}-T_{ik}T_{jl}+T_{il}T_{jk} 
\quad
\text{for } 1 \le i < j < k < l \le c+d 
\text{ with } i,j \le c \text{ and } \ c+1 \le k,l,
\\
h_{ijkl} 
& 
:=
& 
T_{ij}T_{kl}-T_{ik}T_{jl}+T_{il}T_{jk} 
\quad
\text{for } 1 \le i < j < k < l \le c+d \text{ different from above}. 
\end{eqnarray*}
Our task is to show that the polynomials $g_{ijkl}$
and $h_{ijkl}$
generate a prime ideal $\mathfrak{a}$ in the 
polynomial ring $A := \C[T_{ij}; \  1 \le i < j \le c+d]$.
Consider the polynomial ring
\begin{eqnarray*}
B 
& := & 
\C[S_{\alpha\beta}, \, 
   S_\kappa, \, 
   S_{\gamma\delta}; \
   1 \le \alpha < \beta \le c, \
   1 \le \kappa \le c+d, \
   c+1 \le \gamma < \delta \le c+d]
\end{eqnarray*}
and the Segre-type map
$$ 
\sigma \colon A \ \to \ B,
\qquad
T_{ij}
\ \mapsto \
\begin{cases}
S_{ij} & \text{if } 1 \le i < j \le c,
\\
S_iS_j & \text{if } 1 \le i \le c, \ c+1 \le j \le c+d,
\\
S_{ij} & \text{if } \ c+1 \le i < j \le c+d.
\end{cases}
$$
The image $\sigma(A) \subseteq B$ 
equals the degree zero subalgebra $B_0 \subseteq B$ 
arising from the $\Z$-grading on $B$ given by
$\deg(S_{\alpha\beta}) := 0 =: \deg(S_{\gamma\delta})$
always and 
$$ 
\deg(S_\kappa) \ := \ 1, \text{ for } 1 \le \kappa \le c,
\qquad
\deg(S_\kappa) \ := \ -1, \text{ for } c+1 \le \kappa \le c+d.
$$
Moreover, the binomials $g_{ijkl} \in \mathfrak{a}$ 
defined above generate $\ker(\sigma) \subseteq A$. 
For the images of the polynomials $h_{ijkl}$, we obtain
\begin{eqnarray*} 
\sigma(h_{ijkl})
& = &
\begin{cases}
S_{ij}S_{kl}-S_{ik}S_{jl}+S_{il}S_{jk}
&
\text{if }
i,j,k,l \le c,
\\
S_l(S_{ij}S_k-S_{ik}S_j+S_iS_{jk})
&
\text{if }
i,j,k \le c, \ c+1 \le l,
\\
S_i(S_jS_{kl}-S_kS_{jl}+S_lS_{jk})
&
\text{if }
i \le c, \ c+1 \le j,k,l,
\\
S_{ij}S_{kl}-S_{ik}S_{jl}+S_{il}S_{jk}
&
\text{if }
c+1 \le i,j,k,l.
\end{cases}
\end{eqnarray*}
Let $\mathfrak{b}_0 \subseteq B_0$ denote the ideal 
generated by the images $\sigma(h_{ijkl})$.
Then we have $\mathfrak{b}_0 = \sigma(\mathfrak{a})$.
Moreover, consider the ideal 
$\mathfrak{b} \subseteq B$ generated by the polynomials
\begin{align*}
S_{ij}S_{kl}-S_{ik}S_{jl}+S_{il}S_{jk}
&
\quad \text{for }
i,j,k,l \le c, \hfill
\\
S_{ij}S_k-S_{ik}S_j+S_iS_{jk}
&
\quad \text{for }
i,j,k \le c,
\\
S_jS_{kl}-S_kS_{jl}+S_lS_{jk}
&
\quad \text{for }
c+1 \le j,k,l,
\\
S_{ij}S_{kl}-S_{ik}S_{jl}+S_{il}S_{jk}
&
\quad \text{for }
c+1 \le i,j,k,l.
\end{align*}
Note that we have $A/\mathfrak{a} \cong B_0/\mathfrak{b}_0$ 
and $\mathfrak{b}_0 = \mathfrak{b} \cap B_0$.
Thus, it suffices to show that the ideal
$\mathfrak{b} \subseteq B$ is prime. 
For this, define subalgebras
\begin{eqnarray*}
B' 
& := &
\C[S_{\alpha\beta},S_{\kappa};   \ 
    1 \le \alpha < \beta \le c, \ 
    1 \le \kappa \le c],
\\
B'' 
& := &
\C[S_{\gamma\delta},S_{\kappa};   \ 
    c+1 \le \gamma < \delta \le c+d, \ 
    1 \le \kappa \le c].
\end{eqnarray*}
Then we have $B = B' \otimes_\C B''$.
The first ``block'' of generators 
$\sigma(h_{ijkl})$ of $\mathfrak{b}$ lives 
in $B'$; more explicitly, we mean the polynomials
$$
S_{ij}S_{kl}-S_{ik}S_{jl}+S_{il}S_{jk},
\quad
S_{ij}S_k-S_{ik}S_j+S_iS_{jk},
\qquad
\text{where }
i,j,k,l \le c.
$$
Observe that these are in fact Pl\"ucker relations:
setting $e := c+1$ and renaming $S_m$ as $S_{me}$ 
for $1 \le m \le c$, they turn into 
$$
S_{ij}S_{kl}-S_{ik}S_{jl}+S_{il}S_{jk},
\qquad
\text{where }
1 \le i < j < k < l \le e.
$$
In particular, the first block of the $\sigma(h_{ijkl})$ 
generates a prime ideal $\mathfrak{b}' \subseteq B'$. 
Similarly, the second block generates 
a prime ideal $\mathfrak{b}'' \subseteq B''$. 
We have
\begin{eqnarray*}
B/\mathfrak{b}
& \cong & 
B' / \mathfrak{b}' \ \otimes_\C \ B'' / \mathfrak{b}''.
\end{eqnarray*}
Since $\C$ is algebraically closed, we can conclude 
that $B/\mathfrak{b}$ is integral and thus 
$\mathfrak{b} \subseteq B$ is prime.
We showed that $J$ is prime.
 
In order to compute the dimension of $V(J)$,
recall that $B_0 \subseteq B$ is the invariant
algebra of the $\C^*$-action given by the 
$\Z$-grading of $B$. Using this we see
$$ 
\dim(V(J))
\ = \ 
\dim(A/\mathfrak{a})
\ = \ 
\dim(B_0/\mathfrak{b}_0)
\ = \ 
\dim(B/\mathfrak{b})-1
\ = \ 
\dim(B'/\mathfrak{b}') + \dim(B''/\mathfrak{b}'') - 1.
$$
Since the ideals $\mathfrak{b}'$ and $\mathfrak{b}''$
define the cones over the Grassmannians $G(2,c+1)$ 
and $G(2,d+1)$, we obtain at the end
$$ 
\dim(V(J))
\ = \ 
(2(c+1-2) + 1) +  (2(d+1-2) + 1) - 1
\ = \ 
2(c+d)-3.
$$
\end{proof}

\begin{prop}
\label{propiso}
The zero set $V(I) \subseteq \overline{Z}_\infty$ 
is irreducible and equals $\overline{X}_\infty$.
\end{prop}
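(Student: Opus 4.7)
The plan is to decompose $V(I)$ according to whether $T_\infty$ vanishes and match each piece to the corresponding piece of $\overline{X}_\infty$. The irreducibility of $V(J)$ supplied by Proposition~\ref{prop:Tinftyprime} is the key ingredient that will glue the two pieces together.

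First I would verify that $\overline{X}_\infty$ is irreducible of dimension $2(c+d)-2$: the blow-up $X_\infty$ of the irreducible $X_1$ is irreducible, so $\widehat{X}_\infty=p_\infty^{-1}(X_\infty)$ is irreducible (torus quotient with connected fibres), and taking closure and counting dimensions yields the claim. Next, $\overline{X}_\infty\subseteq V(I)$ will follow from: for any $x\in\widehat{X}_\infty$ with $T_\infty(x)\neq 0$, the commutative diagram puts $\overline{\pi}_1(x)$ in $\overline{X}$, so every $\overline{\pi}_1^{*}(P_{ijkl})$ vanishes at $x$; since each generator $g_{ijkl}$ or $h_{ijkl}$ of $I$ differs from the corresponding $\overline{\pi}_1^{*}(P_{ijkl})$ only by a power of $T_\infty$, these generators also vanish at $x$. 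Density of $\widehat{X}_\infty\cap\{T_\infty\neq 0\}$ in $\overline{X}_\infty$ and closedness of $V(I)$ then conclude.

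Then I would localise at $T_\infty$: on $\{T_\infty\neq 0\}$ the ideal $I$ agrees with the pulled-back Pl\"ucker ideal, so $V(I)\cap\{T_\infty\neq 0\}=\overline{\pi}_1^{-1}(\overline{X})\cap\{T_\infty\neq 0\}$. Since $\overline{\pi}_1$ restricts to a trivial $\C^*$-bundle over $\overline{Z}$ on this locus, this set is a $\C^*$-bundle over the irreducible cone $\overline{X}$, hence irreducible of dimension $2(c+d)-2$. Its closure $Y\subseteq\overline{Z}_\infty$ is irreducible of the same dimension; combined with the inclusion $\overline{X}_\infty\subseteq V(I)$ and the density of $\overline{X}_\infty\cap\{T_\infty\neq 0\}$ in $\overline{X}_\infty$, this forces $\overline{X}_\infty\subseteq Y$ and then $\overline{X}_\infty=Y$ by equality of dimensions. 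Thus $V(I)=\overline{X}_\infty\cup V(J)$.

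The crucial final step is to absorb $V(J)$ into $\overline{X}_\infty$. Consider $\overline{X}_\infty\cap\{T_\infty=0\}$: it is the zero locus of a single non-zero regular function on the irreducible $\overline{X}_\infty$, hence pure of dimension $2(c+d)-3$ by Krull's principal ideal theorem, and it is non-empty because it contains $p_\infty^{-1}(E)$ for the exceptional divisor $E$ of $\pi_1\colon X_\infty\to X_1$. It lies inside $V(J)$, which is irreducible of the same dimension by Proposition~\ref{prop:Tinftyprime}, so every irreducible component of $\overline{X}_\infty\cap\{T_\infty=0\}$ must coincide with $V(J)$, giving $V(J)\subseteq\overline{X}_\infty$. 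Combining, $V(I)=\overline{X}_\infty$, which is irreducible. The main obstacle, absorbed by Proposition~\ref{prop:Tinftyprime}, is precisely this merging step: without the primality and exact dimension of $V(J)$, one could not rule out $V(J)$ being a spurious separate component of $V(I)$ inside $\{T_\infty=0\}$.
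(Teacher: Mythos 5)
Your proposal is correct and follows essentially the same route as the paper: decompose $V(I)$ along the vanishing of $T_\infty$, identify $V(I)_{T_\infty}$ with a $\C^*$-bundle over the irreducible cone $\overline{X}$ so that its closure is $\overline{X}_\infty$, and then use Krull's principal ideal theorem together with the irreducibility and dimension count of $V(J)$ from Proposition~\ref{prop:Tinftyprime} to absorb $V(J)$ into $\overline{X}_\infty$. The only cosmetic differences are that you spell out the inclusion $\overline{X}_\infty\subseteq V(I)$ and justify nonemptiness of $\overline{X}_\infty\cap V(T_\infty)$ via the exceptional divisor rather than via the origin, as the paper does.
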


\begin{proof}
Set for the moment $U := V(I)_{\Tn}$ and
observe that we have an isomorphism
$U \rightarrow \overline{X} \times \C^*$, 
$x \mapsto (\overline{\pi}_1(x), x_{\infty})$.
Consequently, $U$ is irreducible and 
of dimension $2(c+d)-2$. 
Denoting by $\overline{U}$  the closure 
of $U$ in $\overline{Z}_\infty$, we have 
$$ 
\overline{X}_\infty 
\ = \ 
\overline{U},
\qquad
V(I) 
\ = \ 
U \cup V(J) 
\ = \ 
\overline{U} \cup V(J).
$$
Clearly, $\overline{U} \cap V(T_\infty)$ 
is contained in $V(J) = V(I) \cap V(T_\infty)$.
As it contains the origin,
$\overline{U} \cap V(T_\infty)$ is nonempty
and thus of dimension $2(c+d)-3$.
Proposition~\ref{prop:Tinftyprime} yields
$\overline{U} \cap V(T_\infty) = V(J)$.
This shows $V(I) = \overline{U} = \overline{X}_\infty$.
\end{proof}

The last step before proving Theorem $\ref{thmone}$ 
is to ensure, that $\pi_{1}\colon Z_{\infty} \to Z_{1}$
is a neat controlled ambient modification for 
$X_{\infty}\subseteq Z_{\infty}$ and $X_{1} \subseteq Z_{1}$
in the sense of~\cite[Def.~5.4, 5.8]{Hausen}.
{\em Neat\/} means that $X_\infty$ intersects the toric exceptional
divisor $E \subseteq Z_\infty$ in its big torus 
orbit and the pullback of $E$ to $X_\infty$ is a prime 
divisor. {\em Controlled\/} means that the minus 
cell $V(T_\infty) \cap \overline{X}_\infty$ of the $\C^*$-action
on $\overline{X}_\infty$ is irreducible.

\begin{prop}
\label{neatcontr}
The map $\pi_{1}\colon Z_{\infty} \to Z_{1}$ is a 
neat controlled ambient modification for 
$X_{\infty}\subseteq Z_{\infty}$ and $X_{1} \subseteq Z_{1}$.
\end{prop}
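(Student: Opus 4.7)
The plan is to verify the two components of the definition directly, leveraging the descriptions of $\overline{X}_\infty$ and of the slice $V(T_\infty) \cap \overline{X}_\infty$ obtained in Propositions~\ref{prop:Tinftyprime} and~\ref{propiso}.

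\emph{Controlled} is immediate: the minus cell of the $\C^*$-action on $\overline{X}_\infty$ is precisely $V(T_\infty) \cap \overline{X}_\infty = V(I) \cap V(T_\infty) = V(J)$, which is irreducible by Proposition~\ref{prop:Tinftyprime}.

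For \emph{neat}, both required properties --- $X_\infty$ meeting $E$ in its big torus orbit, and the pullback of $E$ to $X_\infty$ being a prime divisor --- reduce to producing a single point $\hat{x} \in V(J) \cap \widehat{Z}_\infty$ with every coordinate $T_{ij}(\hat{x}) \neq 0$. Indeed, such $\hat{x}$ lies in $\widehat{X}_\infty = \overline{X}_\infty \cap \widehat{Z}_\infty$ (since $V(J) \subseteq V(I) = \overline{X}_\infty$), and $p_\infty(\hat{x}) \in X_\infty \cap E$ lies in $\orb(\varrho_\infty)$. Furthermore, $V(J) \cap \widehat{X}_\infty$ is then a nonempty open subset of the irreducible variety $V(J)$, hence irreducible; its image under the good quotient $p_\infty$ is an irreducible closed subset of $X_\infty$, and a dimension count using $\dim V(J) = 2(c+d)-3$ from Proposition~\ref{prop:Tinftyprime} identifies this image as a prime divisor on $X_\infty$.

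To construct $\hat{x}$, use the Segre-type map $\sigma \colon A \to B$ from the proof of Proposition~\ref{prop:Tinftyprime}. Pick a point $s \in V(\mathfrak{b})$ with all coordinates $S_{\alpha\beta}, S_\kappa, S_{\gamma\delta}$ nonzero; this is possible because $V(\mathfrak{b}) \cong V(\mathfrak{b}') \times V(\mathfrak{b}'')$ is the product of the affine cones over $G(2,c+1)$ and $G(2,d+1)$, and each such cone carries points with all Pl\"ucker coordinates nonzero. Set $T_{ij}(\hat{x}) := \sigma(T_{ij})(s)$ and $T_\infty(\hat{x}) := 0$; then $\hat{x} \in V(J)$ and every $T_{ij}(\hat{x})$ is a nonzero product of nonzero $S$-coordinates. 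The main obstacle is verifying semistability, $\hat{x} \in \widehat{Z}_\infty$: since the only vanishing coordinate of $\hat{x}$ is $T_\infty$, it suffices to exhibit a maximal cone of $\Sigma_\infty$ containing the ray $\varrho_\infty$, which exists by construction of $\Sigma_\infty$ as the barycentric subdivision of $\Sigma_1$ at $\sigma_1$.
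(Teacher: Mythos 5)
Your argument for \emph{controlled} and for the big-orbit condition is fine and essentially matches the paper: the paper also exhibits a single point of $V(J)\cap\widehat{Z}_\infty$ with all $T_{ij}$-coordinates nonzero (it writes one down explicitly, $x_\infty=0$, $x_{ij}=1$ for $i\le c<j$ and $x_{ij}=j-i$ otherwise, whereas you produce one via the Segre picture; both work), and it derives controlledness from Propositions~\ref{prop:Tinftyprime} and~\ref{propiso} exactly as you do.

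The gap is in the second half of \emph{neat}. What you prove is that the set-theoretic intersection $E\cap X_\infty$ is irreducible of codimension one in $X_\infty$, i.e.\ that the \emph{support} of the pullback of $E$ is a prime divisor. That does not yet show that the pullback of $E$ to $X_\infty$ \emph{is} a prime divisor: the pullback is computed by restricting a local equation for $E$, and a priori it could be a multiple $m\cdot D$ of the irreducible component $D$ you found, with $m\ge 2$. Ruling this out needs the full strength of Proposition~\ref{prop:Tinftyprime} -- namely that $J=I+\langle \Tn\rangle$ is a \emph{prime ideal}, not merely that $V(J)$ is irreducible. This is how the paper proceeds: on the affine chart $\widehat{W}=(\widehat{Z}_\infty)_{T_{12}T_{c,c+1}T_{n-1,n}}$ the $\T^3$-invariant monomial $\widehat{f}=\Tn T_{12}T_{c,c+1}^{-2}T_{n-1,n}$ is a local equation for $V(\Tn)$, it restricts to a \emph{prime element} of $\OO(\widehat{W}\cap\overline{X}_\infty)$ because $J$ is prime, and by invariance it descends through the good quotient to a local equation for $E$ whose restriction to $X_\infty$ is prime, giving multiplicity one. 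Your dimension count can stay as a check that $D$ has the right codimension, but you must add the primality/reducedness step (e.g.\ by observing that $\Tn$ generates a prime ideal in $\OO(\overline{X}_\infty)$ and descending an invariant local equation as above); otherwise the verification of neatness is incomplete.
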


\begin{proof}
Set $n := c+d$. Consider the sets
$\widehat{W} := (\widehat{Z}_{\infty})_{T_{12}T_{c,c+1}T_{n-1,n}}$
and $W := p_{\infty}(\widehat{W})$.
Then the monomial 
$\widehat{f} := T_{\infty}T_{12}T_{c,c+1}^{-2}T_{n-1,n}$
gives a local equation for $V(T_\infty)$ on $\widehat{W}$ and, 
by Proposition~\ref{prop:Tinftyprime}, 
it defines a prime element 
on $\widehat{W} \cap \overline{X}_\infty$.
Since $\widehat{f}$ is $\T^3$-invariant,
it descends to a local equation for $E$ on $W$,
and, as it is prime, pulls back to a local equation
for $E \cap X_\infty$ on $W \cap X_\infty$.
To see that $X_\infty$ intersects the big torus orbit 
in the exceptional divisor $E$, observe that 
$p_\infty(x)$ lies in the intersection for $x_{\infty}=0$, 
$x_{ij}=1$ for $i\leq c<j$ and $x_{ij}=j-i$ else. 
This shows that $p_\infty$ is neat.
Propositions~\ref{prop:Tinftyprime} and~\ref{propiso}
guarantee that $p_\infty$ is controlled.
\end{proof}

\begin{proof}[Proof of Theorem~\ref{thmone}]
According to Proposition \ref{neatcontr}, the 
map $\pi_{1}\colon Z_{\infty} \to Z_{1}$ is a 
neat controlled ambient modification for 
$X_{\infty}\subseteq Z_{\infty}$ and $X_{1} \subseteq Z_{1}$.
In this situation, \cite[Thm.~5.9.]{Hausen}
tells us that~$X_{\infty}$ has the normalization of 
$\OO(\overline{X}_{\infty})$ as its Cox ring.
Thus, according to Proposition~\ref{propiso},
we are left with showing that $I$ is a radical 
ideal defining a normal ring.
Let us see that $I$ is a radical ideal.
We have $\sqrt{I} \subseteq J = I + \Tn$, 
because both ideals are radical and 
the reverse inclusion holds for their zero 
sets. Moreover, $\Tn \notin \sqrt{I}$ gives 
$$
\sqrt{I}
\ = \ 
(I+\Tn)\cap \sqrt{I}
\ = \ 
I+\Tn \sqrt{I}.
$$
By induction, we conclude
$\sqrt{I} = I+\Tn^k \sqrt{I}$
for every $k>0$.
Since $\sqrt{I}$ is finitely generated 
and contained in the prime ideal $I_{\Tn}$,
we have $\Tn^k \sqrt{I} \subseteq I$ 
for some $k$. 
This implies $\sqrt{I}=I$.
We show that 
$R := \C[T_{ij},T_\infty\; 1 \le i < j \le c+d]/I$ 
is normal.
The localization $R_{T_\infty}$ is, up to 
a transformation of variables, 
defined by Pl\"ucker relations and thus factorial.
Since $T_\infty \in R$ is prime, 
we obtain that $R$ is factorial as well and 
hence normal.
\end{proof}



\begin{thebibliography}{}
\bibitem{ArDeHaLa}
I.~Arzhantsev, U.~Derenthal, J.~Hausen, A.~Laface:
Cox rings. Preprint, arXiv:1003.4229; see also the 
authors' webpages.
%
\bibitem{BerHaus}
F.~Berchtold and J.~Hausen: 
GIT equivalence beyond the ample cone.
Michigan Math. J. {\bf 54}, 483--515 (2006).
%
\bibitem{Co}
D.A.~Cox:
The homogeneous coordinate ring of a toric variety. 
J. Algebraic Geom. {\bf 4}, 17--50 (1995). 
%
\bibitem{Hausen}
J.~Hausen: 
Cox rings and combinatorics II. 
Moscow Math. J. {\bf 8}, 711--757 (2008).
%
\bibitem{Ca}
A.-M.~Castravet:
The Cox ring of $\overline M_{0,6}$.  
Trans. Amer. Math. Soc.  {\bf 361},  
3851--3878  (2009).
%
\bibitem{Kap}
M.M.~Kapranov:
Chow quotients of Grassmannians. I. 
I.M.~Gel'fand Seminar,  29--110, 
Adv. Soviet Math., {\bf 16}, Part 2, Amer. Math. Soc., 
Providence, RI, 1993.
%
\bibitem{La1}
D.~Laksov: 
Notes on the evolution of complete correlations. 
Enumerative and Classical Algebraic Geometry Nice, 1981, 
Prog. in Math., Birkh\"auser, {\bf 24}, 107--132 (1982).
%
\bibitem{La2}
D.~Laksov: 
Completed quadrics and linear Maps. 
Algebraic geometry: Bowdoin 1985, 
AMS Proc. of Symp. Pure Math., {\bf 46-2}, 371--381 (1987). 
%
\bibitem{Thaddeus}
M.~Thaddeus: Complete collineations revisited. 
Math. Ann. {\bf 315}, 469--495 (1995).
%
\bibitem{Vain}
I.~Vainsencher: 
Complete collineations and blowing up determinantal 
ideals. 
Math. Ann. {\bf 267}, 417--432 (1984).
%
\end{thebibliography}
\end{document}